\documentclass[11pt,letterpaper]{article}

\usepackage[utf8]{inputenc}
\usepackage[T1]{fontenc}
\usepackage{mathtools}
\usepackage{amssymb}
\usepackage{amsthm}
\usepackage{newtxtext}
\usepackage{newtxmath}
\usepackage{booktabs}
\usepackage{enumitem}
\usepackage[numbers,sort&compress]{natbib}
\usepackage[margin=1in]{geometry}
\usepackage{microtype}
\usepackage[hidelinks]{hyperref}

\numberwithin{equation}{section}
\allowdisplaybreaks[2]

\newcommand{\R}{\mathbb{R}}
\newcommand{\Q}{\mathbb{Q}}
\newcommand{\proofstep}[1]{%
  \par\medskip
  \noindent\textbf{#1}%
  \par\smallskip
}

\theoremstyle{plain}
\newtheorem{theorem}{Theorem}[section]
\newtheorem{lemma}[theorem]{Lemma}
\newtheorem{corollary}[theorem]{Corollary}

\theoremstyle{remark}
\newtheorem{remark}[theorem]{Remark}

\begin{document}

\title{An Inverse Theorem for Partially Symmetric Two-Dimensional
Semiclassical Schr\"odinger Operators}

\author{Kuo Wang\\
  \small School of Mathematical Sciences, Nankai University, Tianjin, China\\
  \small \href{mailto:1120240038@mail.nankai.edu.cn}{1120240038@mail.nankai.edu.cn}}

\date{\today}

\maketitle

\begin{abstract}
We study a two-dimensional semiclassical Schr\"odinger operator whose potential 
admits one reflection symmetry. Under a rational independence assumption on the 
harmonic frequencies and a nonvanishing condition \(a_{30}\ne0\), we show that the 
first two layers of the quantum Birkhoff normal form, together with the sign of 
\(a_{30}\) and the transverse data \(\{a_{1,2k}\}_{k\ge1}\), determine the full 
Taylor series of the potential at the bottom of the well. The proof is constructive: 
the \(\hbar^2\)-layer gives a triangular recursion for odd-degree terms, while the 
classical layer recovers even-degree terms from their resonant projections.
\end{abstract}

\section{Introduction}

We consider the two-dimensional semiclassical Schr\"odinger operator
\begin{equation}
\label{eq:Schrodinger}
\hat H_\hbar=-\frac{\hbar^2}{2}\Delta+V(q),
\qquad q\in\R^2.
\end{equation}
Assume that the real-valued potential $V\in C^\infty(\R^2;\R)$ has a unique
nondegenerate global minimum at the origin, with $V(0)=0$, and that
$V^{-1}([0,\epsilon])$ is compact for some sufficiently small $\epsilon>0$.
Then, for every sufficiently small $\delta\in(0,\epsilon)$ and sufficiently
small $\hbar$, the spectrum of $\hat H_\hbar$ in $[0,\delta]$ consists of
finitely many eigenvalues. We denote them by
$\{E_j(\hbar)\}_{j=1}^{N_\hbar(\delta)}$. The Weyl law gives
\begin{equation}
\label{eq:weyl}
N_\hbar(\delta)
:=\#\{j:0\le E_j(\hbar)\le\delta\}
=(2\pi\hbar)^{-2}\left(
\operatorname{Vol}\left\{0\le\frac12\lVert p\rVert^2+V(q)\le\delta\right\}
+o(1)\right)
\end{equation}
as $\hbar\downarrow0$; see \cite{DS99}.

We study the following local semiclassical inverse problem: to what extent do
the low-lying eigenvalues determine the Taylor expansion of $V$ at the origin
as $\hbar\downarrow0$?

This problem can be approached through the quantum Birkhoff normal form, which
is a spectral invariant. If the harmonic frequencies satisfy a nonresonance
condition, a formal conjugation brings \eqref{eq:Schrodinger} into a normal form
whose Weyl symbol depends only on
$\Omega_j=x_j^2+\xi_j^2$ and on even powers of $\hbar$; see
\cite{ISJ02,VN06}:
\begin{equation}
\label{eq:QBNF}
B\equiv H_2+\sum_{2r+k+\ell\ge2}b_{r,k,\ell}\,
\hbar^{2r}\Omega_1^k\Omega_2^\ell.
\end{equation}
When the harmonic frequencies are rationally independent, the low-lying
spectral data in a sufficiently small fixed energy interval determine the QBNF;
see \cite{GU07,GPU07,CdV09}. Thus, once the quantum Birkhoff normal form is
known, the inverse spectral problem becomes an algebraic one: can the Taylor
coefficients of the potential be recovered from the coefficients of the normal
form?

In the existing literature, this reconstruction depends strongly on symmetry.
If the potential is even in each coordinate, the classical Birkhoff normal form,
that is, the $\hbar^0$ part of the quantum normal form, already determines its
Taylor series \cite{GU07}. Without additional global hypotheses, even the complete bottom of the well semiclassical spectral asymptotics do not in general determine the potential globally; see the examples of Guillemin--Hezari \cite{GH12} and West
\cite{West23}.

Colin de Verdi\`ere and Guillemin \cite{CdVG11} showed in dimension one that the
quantum part of the normal form, beginning with the $\hbar^2$ terms, can
compensate for the lack of evenness. In higher dimensions, known results
usually impose additional structural restrictions. For example, after a
relabeling of coordinates, Hezari \cite{Hez09} obtained reconstruction for
potentials of the form
\[
V(x)=f(x_1^2,\dots,x_n^2)+x_n^3g(x_1^2,\dots,x_n^2).
\]
In the two-dimensional result of Guillemin and Uribe \cite{GU11}, coordinatewise
evenness is weakened to the reflection symmetry
$V(x_1,x_2)=V(x_1,-x_2)$ together with the additional axial condition
$V(x_1,0)=V(-x_1,0)$ and the genericity assumption
$\partial_{x_1}\partial_{x_2}^2V(0)\ne0$.

We investigate an intermediate two-dimensional setting with only the reflection
symmetry
\begin{equation}
\label{eq:symmetry}
V(q_1,q_2)=V(q_1,-q_2).
\end{equation}
Near the origin, write
\begin{equation}
\label{eq:V_taylor}
V(q_1,q_2)=\frac12\bigl(v_1^2q_1^2+v_2^2q_2^2\bigr)
 +\sum_{j+2k\ge3}c_{j,2k}\,q_1^jq_2^{2k},
\end{equation}
where $v_1,v_2>0$. Introduce the linear symplectic coordinates
\begin{equation}
\label{eq:linear_normalization}
x_i=v_i^{1/2}q_i,
\qquad
\xi_i=v_i^{-1/2}p_i,
\qquad i=1,2.
\end{equation}
After the corresponding linear conjugation, the Weyl symbol becomes
\begin{equation}
\label{eq:normalized_H}
H=H_2+U(x_1,x_2),
\qquad
H_2=\frac12\sum_{i=1}^2v_i(x_i^2+\xi_i^2),
\end{equation}
where
\begin{equation}
\label{eq:normalized_U}
U(x_1,x_2)=\sum_{j+2k\ge3}a_{j,2k}\,x_1^jx_2^{2k},
\qquad
a_{j,2k}=c_{j,2k}v_1^{-j/2}v_2^{-k}.
\end{equation}
We label $v_1$ as the frequency tangent to the reflection axis $q_2=0$ and
$v_2$ as the frequency in the normal direction, and keep this labeling fixed.
Recovering the Taylor coefficients of $U$ is equivalent to recovering those of
$V$.

\begin{theorem}
\label{thm:main}
For the normalized Hamiltonian \eqref{eq:normalized_H}--\eqref{eq:normalized_U},
assume that $v_1/v_2\notin\Q$ and $a_{30}\ne0$. Given
\begin{enumerate}[label=\textup{(\roman*)},leftmargin=2.8em,itemsep=0.2ex]
\item the sign of $a_{30}$;
\item the sequence of normalized coefficients $\{a_{1,2k}\}_{k\ge1}$,
\end{enumerate}
the first two layers of the quantum Birkhoff normal form, namely the
coefficients $b_{0,k,\ell}$ and $b_{1,k,\ell}$, determine the full Taylor series
of $U$, and hence the Taylor series of $V$ at the bottom of the well.
Moreover,
\begin{equation}
\label{eq:a30}
a_{30}^2=2v_1b_{1,0,0}
 +\frac{v_2^2}{v_1^2-4v_2^2}\,a_{12}^2.
\end{equation}
\end{theorem}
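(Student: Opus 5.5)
We organize the reconstruction as a double induction on the homogeneous degree $N=j+2k$ of the Taylor terms $a_{j,2k}x_1^jx_2^{2k}$. The normal form is computed by the standard quantum Birkhoff procedure (Weyl calculus, Moyal bracket, homological equation $\{H_2,W\}=R-\langle R\rangle$, where $\langle\cdot\rangle$ is averaging over the torus flow of $H_2$). Because $v_1/v_2\notin\Q$, the homological equation is solvable at every order and $\langle\cdot\rangle$ retains only monomials in $\Omega_1,\Omega_2$. The structural fact I would establish first is a \emph{leading-term lemma}. Fix degree $N$ and write $U_N$ for the degree-$N$ part of $U$. Then the coefficients $b_{0,k,\ell}$ with $2(k+\ell)=N$ (for $N$ even) and $b_{1,k,\ell}$ with $2(k+\ell)=N-2$ (for $N$ odd, where the relevant terms are quadratic in odd-degree data) depend on $U_N$ in a controlled way modulo terms involving only lower-degree coefficients. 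For even $N$, the $\hbar^0$ contribution is linear, namely $\langle U_N\rangle$. For odd $N$, $\langle U_N\rangle=0$, and the first contribution comes from the $\hbar^2$ part of the Moyal correction, bilinear in $U_N$ and $U_3$. This is the $\hbar^2$-analogue of the Colin de Verdière--Guillemin one-dimensional argument.

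\textbf{Base case and \eqref{eq:a30}.} At degree $3$ the odd part is $a_{30}x_1^3+a_{12}x_1x_2^2$. I would compute $b_{1,0,0}$ explicitly. The Moyal bracket of $W_3$ with $U_3$ contributes an $\hbar^2$ constant proportional to a sum over the four-derivative terms $\partial^3 W_3\,\partial^3U_3$. Solving $\{H_2,W_3\}=U_3$ with divisors $v_1$, $3v_1$ and $v_1\pm2v_2$ yields $b_{1,0,0}=\frac{1}{2v_1}a_{30}^2-\frac{v_2^2}{2v_1(v_1^2-4v_2^2)}a_{12}^2$, up to constants I would have to check, which rearranges to \eqref{eq:a30}. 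Since $a_{12}$ is given in (ii) and the sign of $a_{30}$ is given in (i), $a_{30}$ is determined.

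\textbf{Odd degrees ($\hbar^2$-layer, triangular recursion).} For odd $N=j+2k\ge5$, the unknowns are $a_{j,2k}$ with $j$ odd. The $a_{1,2k}$ are supplied by (ii), so the unknowns are $a_{3,N-3},a_{5,N-5},\dots,a_{N,0}$, i.e.\ $(N-1)/2$ of them. The available data are the $\hbar^2$ coefficients $b_{1,k,\ell}$ with $k+\ell=(N-3)/2$, also $(N-1)/2$ of them. I would show that the dependence of these on the unknowns is linear, with coefficients proportional to $a_{30}$, through the cross term $\langle\{\!\{W_3,U_N\}\!\}_{\hbar^2}\rangle$ and its cousins. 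The crucial point is that, in the monomial basis ordered by the power of $x_1$, the resulting matrix is triangular with nonzero diagonal entries multiples of $a_{30}\ne0$; the other degree-$N$-dependent terms involve $a_{12}$ and known data. Proving this triangularity and the nonvanishing of the diagonal is the main obstacle. I would attack it by working in complex coordinates $z_i=x_i+i\xi_i$ and tracking the top power of $\Omega_1$ produced by the pairing of $x_1^3$ with $x_1^jx_2^{N-j}$. The point is that only $a_{30}$ can raise the $\Omega_1$-degree maximally, so the diagonal coefficient is an explicit rational function of $v_1,v_2$ times $a_{30}$. Irrationality guarantees that this coefficient has no vanishing denominators, and I would check directly that the numerator is nonzero.

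\textbf{Even degrees (classical layer).} For even $N$, the $\hbar^0$ coefficients $b_{0,k,\ell}$ with $k+\ell=N/2$ equal $\langle U_N\rangle$ plus known lower-order terms. Here $\langle x_1^jx_2^{2k}\rangle=\binom{j}{j/2}\binom{2k}{k}2^{-N}\Omega_1^{j/2}\Omega_2^{k}$ times constants, and since every even-degree monomial with $j$ even appears, the map $U_N\mapsto\langle U_N\rangle$ is a bijection, monomial by monomial. The lower-order terms involve odd-degree coefficients of degree $<N$, already known by the recursion, so all $a_{j,2k}$ with $j$ even are recovered. Alternating these two steps in increasing $N$ completes the induction and determines the full Taylor series of $U$, hence of $V$ by \eqref{eq:normalized_U}.
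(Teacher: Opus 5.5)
Your proposal is correct and follows essentially the same route as the paper. The base case obtains \eqref{eq:a30} from $b_{1,0,0}=\frac1{48}\{S_3,U_3\}_3$, each odd layer is recovered from the $\hbar^2$ part of the next even degree through the cross terms $\{S_3,U_{2N-1}\}_3+\{L^{-1}U_{2N-1},U_3\}_3$, and each even layer is recovered from its resonant projection at $\hbar^0$. The paper makes your triangularity claim precise by bidegree counting: the coefficient of $\Omega_1^{n-2}\Omega_2^m$ involves only $a_{2n-1,2m}$, whose coefficient is a nonzero multiple of $a_{30}/v_1$ with the two brackets contributing equally, and $a_{2n-3,2m+2}$, whose coefficient is a multiple of $a_{12}$. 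The system is therefore bidiagonal and is solved backward from the prescribed $a_{1,2N-2}$. Also, your first range should read $2(k+\ell)=N-3$, not $N-2$, which is what you use later.
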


\begin{remark}
The auxiliary sequence $\{a_{1,2k}\}_{k\ge1}$ is equivalent to the Taylor
series of
\[
\partial_{x_1}U(0,x_2)=\sum_{k\ge1}a_{1,2k}x_2^{2k}.
\]
By \eqref{eq:linear_normalization}--\eqref{eq:normalized_U}, this amounts to
prescribing the corresponding transverse derivative data for the original
potential $V$. The result is therefore not a pure spectral rigidity theorem;
it is a Cauchy-type formal reconstruction result with prescribed
transverse-line data.
\end{remark}

As a direct consequence, we obtain the following inverse spectral statement.

\begin{corollary}
\label{cor:spectral}
Let $V$ satisfy the assumptions above, including the reflection symmetry
\eqref{eq:symmetry}, and label $v_1,v_2$ as specified after
\eqref{eq:normalized_U}. Suppose that $v_1/v_2\notin\Q$ and $a_{30}\ne0$.
Given $\operatorname{sgn}(a_{30})$ and $\{a_{1,2k}\}_{k\ge1}$, the family of
low-lying spectra of \eqref{eq:Schrodinger} in a sufficiently small fixed
interval $[0,\delta]$, for all sufficiently small $\hbar>0$ and including
multiplicities, uniquely determines the full Taylor series of $V$ at the origin.
\end{corollary}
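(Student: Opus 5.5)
The proof reduces the corollary to Theorem~\ref{thm:main} in three steps: pass from the spectrum to the quantum Birkhoff normal form, apply the theorem, and undo the linear normalization \eqref{eq:linear_normalization}.

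\textbf{Step 1: spectrum to normal form.} By the nondegeneracy of the minimum and the compactness of $V^{-1}([0,\epsilon])$, the spectrum of $\hat H_\hbar$ in $[0,\delta]$ is discrete. It is described, modulo $O(\hbar^\infty)$, by the quantum Birkhoff normal form \eqref{eq:QBNF}. Since $v_1/v_2\notin\Q$, the results quoted in the introduction \cite{GU07,GPU07,CdV09} apply. They say that the family of spectra in $[0,\delta]$, for all small $\hbar$ and counted with multiplicity, determines every coefficient $b_{r,k,\ell}$. In particular it determines the layers $b_{0,k,\ell}$ and $b_{1,k,\ell}$.

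\textbf{Step 2: fixing the labeling.} The quadratic part of the normal form, $H_2$, is read off from the leading asymptotics $\hbar\bigl(v_1(n_1+\tfrac12)+v_2(n_2+\tfrac12)\bigr)$. These asymptotics fix the unordered pair $\{v_1,v_2\}$. I expect this to be the only delicate point. The assignment of which frequency is tangent to the axis $q_2=0$ is not spectral information. It is fixed by the hypothesis, namely the labeling convention stated after \eqref{eq:normalized_U}. With that labeling, the indices $(k,\ell)$ of $b_{r,k,\ell}$ are attached to $\Omega_1,\Omega_2$ unambiguously, so the coefficients are known as an indexed family rather than up to a swap.

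\textbf{Step 3: applying the theorem and undoing the scaling.} The given data $\operatorname{sgn}(a_{30})$ and $\{a_{1,2k}\}_{k\ge1}$ are exactly hypotheses (i) and (ii) of Theorem~\ref{thm:main}. The assumptions $v_1/v_2\notin\Q$ and $a_{30}\neq0$ are also in force. So Theorem~\ref{thm:main} recovers every coefficient $a_{j,2k}$ of $U$. Inverting \eqref{eq:normalized_U}, using the now-known $v_1,v_2>0$, gives
\[
c_{j,2k}=a_{j,2k}\,v_1^{j/2}v_2^{k}.
\]
Together with the quadratic part $\tfrac12(v_1^2q_1^2+v_2^2q_2^2)$ and the fact that all odd powers of $q_2$ vanish by \eqref{eq:symmetry}, these coefficients determine the full Taylor series \eqref{eq:V_taylor} of $V$ at the origin.

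No further computation is needed: uniqueness follows because every step is a well-defined map from the spectral data and the prescribed data.
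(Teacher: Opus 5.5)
Your proposal is correct and follows the paper's own proof. The paper likewise uses the cited bottom-of-the-well results to get the QBNF from the spectra, then applies Theorem~\ref{thm:main} to recover the Taylor series of $U$, and inverts \eqref{eq:normalized_U} to obtain that of $V$. Your Step~2 treats the assignment of $v_1$ to the axis direction as supplied by the labeling convention rather than read off from the spectrum. That only makes explicit an identification the paper's short proof uses without comment.
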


\begin{proof}
In the nonresonant case, the bottom of the well spectral determination results
of \cite{GU07,GPU07,CdV09} imply that this family of exact spectra determines
the QBNF. Theorem~\ref{thm:main} then recovers the Taylor series of $U$, and
\eqref{eq:normalized_U} converts its coefficients into those of $V$.
\end{proof}

The class treated by Hezari \cite{Hez09} contains no mixed odd terms of the form
$x_1x_2^{2k}$ after the exceptional coordinate is labeled $x_1$; in the present
notation, this corresponds to $a_{1,2k}=0$. Here that sequence need not vanish,
but it is prescribed as transverse-line data. Compared with Guillemin--Uribe
\cite{GU11}, we retain only \eqref{eq:symmetry}: neither the axial condition
$V(q_1,0)=V(-q_1,0)$ nor the genericity condition corresponding to
$\partial_{q_1}\partial_{q_2}^2V(0)\ne0$ is assumed. The nondegeneracy condition
used here is instead $a_{30}\ne0$.

The rest of the paper is organized as follows. Section~\ref{sec:preliminaries}
reviews the Moyal product, the Weyl algebra, and the quantum Birkhoff normal
form. Section~\ref{sec:proof} proves Theorem~\ref{thm:main}. The projected
derivative identities used in the higher-order calculation are established in
Appendix~\ref{app:explicit_calc}.

\section{\label{sec:preliminaries}Preliminaries}

We work formally on $T^*\R^2$ and collect the notation used in the proof.

\subsection{Moyal product and Weyl algebra}

For symbols $A,B\in C^\infty(\R^{2d})$, their Moyal product is
\begin{equation}
\label{eq:moyal_product}
A\star B
=\left.
\exp\left[\frac{\mathrm i\hbar}{2}
\left(\partial_x\cdot\partial_\eta-\partial_\xi\cdot\partial_y\right)\right]
A(x,\xi)B(y,\eta)
\right|_{(y,\eta)=(x,\xi)}.
\end{equation}
For $j\ge0$, set
\[
\{A,B\}_j
=\left.
\left(\partial_x\cdot\partial_\eta-\partial_\xi\cdot\partial_y\right)^j
A(x,\xi)B(y,\eta)
\right|_{(y,\eta)=(x,\xi)}.
\]
Thus $\{A,B\}_0=AB$, $\{A,B\}_1=\{A,B\}$ is the classical Poisson bracket, and
\[
A\star B=\sum_{j=0}^\infty\frac1{j!}
\left(\frac{\mathrm i\hbar}{2}\right)^j\{A,B\}_j.
\]
The Moyal bracket $[A,B]^\star:=A\star B-B\star A$ contains only odd powers of
$\hbar$:
\begin{equation}
\label{eq:moyal_bracket}
\frac1{\mathrm i\hbar}[A,B]^\star
=\{A,B\}-\frac{\hbar^2}{24}\{A,B\}_3
 +\frac{\hbar^4}{1920}\{A,B\}_5-\cdots.
\end{equation}
When $W=W(x)$ depends only on the position variables, the third-order bracket in
dimension two is
\begin{equation}
\label{eq:position_bracket}
\begin{split}
\{A,W\}_3=-\bigl(&\partial_{\xi_1}^3A\,\partial_{x_1}^3W
 +3\partial_{\xi_1}^2\partial_{\xi_2}A\,
   \partial_{x_1}^2\partial_{x_2}W\\
&+3\partial_{\xi_1}\partial_{\xi_2}^2A\,
   \partial_{x_1}\partial_{x_2}^2W
 +\partial_{\xi_2}^3A\,\partial_{x_2}^3W\bigr).
\end{split}
\end{equation}

Let $\mathcal W_m$ be the space of weighted homogeneous polynomials of degree
$m$ in $x,\xi,\hbar$, where
\[
\deg x_j=\deg\xi_j=1,
\qquad
\deg\hbar=2.
\]
The formal Weyl algebra is the completed graded space
\[
\mathcal W=\prod_{m\ge0}\mathcal W_m,
\qquad
\mathcal W_m\star\mathcal W_n\subset\mathcal W_{m+n}.
\]
We write $F=\mathcal O_{\mathrm w}(M)$ when every weighted homogeneous component
of $F$ has degree at least $M$.

\subsection{Resonant terms}

Introduce complex coordinates
\[
z_j=x_j+\mathrm i\xi_j,
\qquad
\bar z_j=x_j-\mathrm i\xi_j,
\qquad j=1,2.
\]
For $\alpha,\beta\in\mathbb N_0^2$, write
$z^\alpha=z_1^{\alpha_1}z_2^{\alpha_2}$ and similarly for $\bar z^\beta$. Recall
that
\[
H_2=\frac12(v_1\Omega_1+v_2\Omega_2),
\qquad
\Omega_j=x_j^2+\xi_j^2=z_j\bar z_j.
\]
Define $L(P)=\{P,H_2\}$. Since $H_2$ is quadratic,
\[
\frac{\mathrm i}{\hbar}[P,H_2]^\star=-L(P),
\]
and a direct computation gives
\begin{equation}
\label{eq:L_action}
L(z^\alpha\bar z^\beta)
=-\mathrm i\langle v,\alpha-\beta\rangle z^\alpha\bar z^\beta,
\qquad
\langle v,\alpha-\beta\rangle
=v_1(\alpha_1-\beta_1)+v_2(\alpha_2-\beta_2).
\end{equation}
Because $v_1/v_2\notin\Q$, the frequency
$\langle v,\alpha-\beta\rangle$ vanishes if and only if $\alpha=\beta$. Since
$L$ does not act on $\hbar$,
\[
\ker L\cap\mathcal W_m
=\operatorname{span}\{\hbar^r z^\alpha\bar z^\alpha:
 r\ge0,\ 2r+2|\alpha|=m\},
\]
and hence
\[
\ker L=\mathbb C[[\hbar,\Omega_1,\Omega_2]].
\]

Define the resonant projection $\Pi$ coefficientwise by retaining exactly the
monomials $\hbar^rz^\alpha\bar z^\beta$ with $\alpha=\beta$. Every homogeneous
$F\in\mathcal W_m$ has a unique decomposition
\[
F=\Pi F+L(G),
\qquad
\Pi G=0.
\]
If, at a fixed power of $\hbar$,
$F-\Pi F=\sum_{\alpha\ne\beta}c_{\alpha\beta}z^\alpha\bar z^\beta$, then
\begin{equation}
\label{eq:L_inverse}
G=L^{-1}(F-\Pi F)
=\mathrm i\sum_{\alpha\ne\beta}
\frac{c_{\alpha\beta}}{\langle v,\alpha-\beta\rangle}
 z^\alpha\bar z^\beta.
\end{equation}
The formula is applied coefficientwise in $\hbar$.

\subsection{Quantum Birkhoff normal form and the homological equation}

Write
\[
H=H_2+U_3+U_4+\cdots,
\]
where $U_m$ is homogeneous of degree $m$ in $x$. The formal construction seeks
$S=S_3+S_4+\cdots$, with $S_m\in\mathcal W_m$ and $\Pi S_m=0$, such that
\begin{equation}
\label{eq:BCH}
e^{\frac{\mathrm i}{\hbar}\operatorname{ad}^\star_S}H
=H+\frac{\mathrm i}{\hbar}[S,H]^\star
 +\frac1{2!}\left(\frac{\mathrm i}{\hbar}\right)^2
 [S,[S,H]^\star]^\star+\cdots
\equiv H_2+\sum_{m\ge3}B_m.
\end{equation}
Matching terms of weighted degree $m$ gives
\[
B_m=F_m-L(S_m),
\]
where $F_m$ depends on $U_3,\dots,U_m$ and on the previously determined
generators $S_3,\dots,S_{m-1}$. Therefore
\begin{equation}
\label{eq:homological_equation}
B_m=\Pi F_m,
\qquad
S_m=L^{-1}(F_m-B_m).
\end{equation}
The normalization $\Pi S_m=0$ makes $S_m$ unique.

Every resonant monomial has even weighted degree, so $B_{2N-1}=0$. Moreover,
since the initial Hamiltonian is independent of $\hbar$ and
\eqref{eq:moyal_bracket} contains only even powers after division by
$\mathrm i\hbar$, the QBNF has the form
\[
B=H_2+\sum_{N=2}^\infty B_{2N}
=H_2+\sum_{2r+k+\ell\ge2}b_{r,k,\ell}\,
\hbar^{2r}\Omega_1^k\Omega_2^\ell.
\]
We write
\begin{equation}
\label{eq:hbar_layers}
S_m=S_m^0+\hbar^2S_m^2+\cdots,
\qquad
B_m=B_m^0+\hbar^2B_m^2+\cdots,
\end{equation}
where $S_m^{2r}$ and $B_m^{2r}$ are homogeneous polynomials in $(x,\xi)$ of
degree $m-4r$.

\section{\label{sec:proof}Proof of Theorem~\ref{thm:main}}

We begin with an elementary observation about even-degree terms.

\begin{lemma}
\label{lem:v2n}
Suppose that $U$ satisfies the reflection symmetry
$U(x_1,x_2)=U(x_1,-x_2)$. Then its
homogeneous component of degree $2N$,
\[
U_{2N} = \sum_{k=0}^{N} a_{2N-2k,2k}\, x_1^{2N-2k}x_2^{2k},
\]
is uniquely determined by its resonant projection $\Pi(U_{2N})$.
\end{lemma}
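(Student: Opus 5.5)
We compute $\Pi$ directly on monomials. Since $x_j=(z_j+\bar z_j)/2$, the resonant projection of $x_j^{2m}$ keeps only the terms $z_j^m\bar z_j^m$, so
\[
\Pi(x_j^{2m})=\binom{2m}{m}2^{-2m}\,\Omega_j^m .
\]
The projection also factors over the two coordinates. A monomial $z^\alpha\bar z^\beta$ is resonant exactly when $\alpha_1=\beta_1$ and $\alpha_2=\beta_2$, and these two conditions involve disjoint variables. Hence
\[
\Pi\bigl(x_1^{2N-2k}x_2^{2k}\bigr)=\Pi(x_1^{2N-2k})\,\Pi(x_2^{2k})
=\gamma_{N-k}\gamma_k\,\Omega_1^{N-k}\Omega_2^{k},
\qquad \gamma_m:=\binom{2m}{m}2^{-2m}>0 .
\]

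It follows that
\[
\Pi(U_{2N})=\sum_{k=0}^N \gamma_{N-k}\gamma_k\,a_{2N-2k,2k}\,\Omega_1^{N-k}\Omega_2^k .
\]
The monomials $\Omega_1^{N-k}\Omega_2^k=z_1^{N-k}\bar z_1^{N-k}z_2^k\bar z_2^k$ are distinct for different $k$, so they are linearly independent. Each coefficient $a_{2N-2k,2k}$ can therefore be read off by dividing the coefficient of $\Omega_1^{N-k}\Omega_2^k$ by the nonzero constant $\gamma_{N-k}\gamma_k$. In other words, the linear map $U_{2N}\mapsto \Pi(U_{2N})$ is injective on the space of reflection-symmetric polynomials of degree $2N$.

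The reflection symmetry is what makes this work. It guarantees that every power of $x_2$ in $U_{2N}$ is even, and because the total degree is even, every power of $x_1$ is even as well. Every monomial of $U_{2N}$ therefore has a nonzero resonant part. By contrast, odd-power monomials such as $x_1^3x_2$ would be annihilated by $\Pi$, and the projection would lose them. No step presents a real obstacle; the only care needed is to check the factorization of $\Pi$ across the two coordinates, which holds because resonance is decided coordinatewise. The lemma does not even use the rational independence of $v_1,v_2$, since it only involves $\Pi$.
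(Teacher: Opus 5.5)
Your proof is correct and is essentially the paper's argument: the paper also expands $x_1^{2N-2k}x_2^{2k}$ binomially in $z_j,\bar z_j$, isolates the single resonant term with coefficient $2^{-2N}\binom{2N-2k}{N-k}\binom{2k}{k}>0$, and reads off each $a_{2N-2k,2k}$. Your coordinatewise factorization of $\Pi$ and your explicit remark that the monomials $\Omega_1^{N-k}\Omega_2^k$ are linearly independent only repackage that computation more cleanly; the paper leaves the independence implicit.
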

\begin{proof}
Using $x_j=(z_j+\bar z_j)/2$, we expand a
generic monomial by the binomial theorem:
\begin{align*}
x_1^{2N-2k}x_2^{2k}
&= \frac{1}{2^{2N}} (z_1+\bar{z}_1)^{2N-2k} (z_2+\bar{z}_2)^{2k} \\
&= \frac{1}{2^{2N}} \sum_{l=0}^{2N-2k} \sum_{j=0}^{2k}
   \binom{2N-2k}{l}\binom{2k}{j}
   z_1^{2N-2k-l}\bar{z}_1^{l}\, z_2^{2k-j}\bar{z}_2^{j}.
\end{align*}
The resonant terms satisfy $\alpha=\beta$; equivalently, the
powers of $z$ and $\bar{z}$ must match in each coordinate:
\[
2N-2k-l=l \Longleftrightarrow l=N-k,
\qquad
2k-j=j \Longleftrightarrow j=k.
\]
The coefficient of this resonant monomial is
\[
\frac{1}{2^{2N}} \binom{2N-2k}{N-k}\binom{2k}{k}
  z_1^{N-k}\bar{z}_1^{N-k} z_2^{k}\bar{z}_2^{k}
= \frac{1}{2^{2N}} \binom{2N-2k}{N-k}\binom{2k}{k} \Omega_1^{N-k}\Omega_2^{k}.
\]
Since
$\binom{2N-2k}{N-k}\binom{2k}{k}>0$, each coefficient
$a_{2N-2k,2k}$ can be recovered uniquely
from $\Pi(U_{2N})$.
\end{proof}

\subsection{\label{subsec:low}Base case of induction}

\proofstep{Step 1. Homological equations in degrees $3$ and $4$.}

Substitute $H=H_2+U_3+U_4+\cdots$ and
$S=S_3+S_4+\cdots$ into the BCH expansion and compare terms of weighted
degrees $3$ and $4$. Using the
vanishing of odd-degree resonant terms ($B_3=0$) and the identity
$\frac{\mathrm i}{\hbar}[S_3,H_2]^\star=-L(S_3)$, we obtain
the homological equations
\begin{align}
  U_3 - L(S_3) &= 0, \label{eq:m3} \\
  U_4 + \frac{\mathrm{i}}{\hbar}[S_4,H_2]^\star
      + \frac{\mathrm{i}}{\hbar}[S_3,U_3]^\star
      + \frac{1}{2}\Bigl(\frac{\mathrm{i}}{\hbar}\Bigr)^2
        [S_3,[S_3,H_2]^\star]^\star
      &= B_4. \label{eq:m4_raw}
\end{align}

\proofstep{Step 2. Simplification of the degree-$4$ equation.}

From \eqref{eq:m3},
$[S_3,H_2]^\star=\mathrm i\hbar U_3$, and therefore
\[
\frac12\left(\frac{\mathrm i}{\hbar}\right)^2
 [S_3,[S_3,H_2]^\star]^\star
=-\frac12\frac{\mathrm i}{\hbar}[S_3,U_3]^\star.
\]
Because $S_3$ and $U_3$ are cubic, their fifth and higher Moyal brackets
vanish. Thus
\[
\frac{\mathrm{i}}{\hbar}[S_3,U_3]^\star
  = -\{S_3,U_3\} + \frac{\hbar^2}{24}\{S_3,U_3\}_3.
\]
Substitution into \eqref{eq:m4_raw} gives
\begin{equation}
\label{eq:m4}
  U_4 - L(S_4) - \frac12\{S_3,U_3\} + \frac{\hbar^2}{48}\{S_3,U_3\}_3
  = A\Omega_1^2 + B\Omega_1\Omega_2 + C\Omega_2^2 + D\hbar^2,
\end{equation}
where $D = b_{1,0,0}$.

\proofstep{Step 3. Reconstruction of $U_3$ and $S_3$.}

Taking the $\hbar^2$ part of \eqref{eq:m4} and applying $\Pi$ gives
\begin{equation}
\label{eq:b100_projection}
 b_{1,0,0}=\frac1{48}\Pi\{S_3,U_3\}_3.
\end{equation}
We first solve the degree-$3$ homological equation. By reflection symmetry,
\[
U_3=a_{30}x_1^3+a_{12}x_1x_2^2,
\qquad
S_3=L^{-1}U_3.
\]

For the $x_1^3$ term,
\[
L(\xi_1^3)=-3v_1x_1\xi_1^2,
\qquad
L(x_1^2\xi_1)=v_1(2x_1\xi_1^2-x_1^3),
\]
so
$L(x_1^2\xi_1+\frac23\xi_1^3)=-v_1x_1^3$.

For the $x_1x_2^2$ term, seek a preimage of the form
$A\xi_1x_2^2+B\xi_1\xi_2^2+Fx_1x_2\xi_2$. Applying $L$ and comparing
coefficients gives
\begin{equation*}
  -v_1 A - v_2 F = 1, \quad
  -v_1 B + v_2 F = 0, \quad
  2v_2 A - 2v_2 B + v_1 F = 0.
\end{equation*}
Solving gives
\[
B=\frac{2v_2^2}{v_1(v_1^2-4v_2^2)},
\qquad
F=\frac{v_1}{v_2}B,
\qquad
A=-\frac1{v_1}-B.
\]
The denominator is nonzero because $v_1/v_2\notin\Q$.

Thus
\begin{equation}
\label{eq:S3_explicit}
\begin{aligned}
S_3={}&-\frac{a_{30}}{v_1}
 \left(x_1^2\xi_1+\frac23\xi_1^3\right)\\
&+a_{12}\left(
 \frac{2v_2^2-v_1^2}{v_1(v_1^2-4v_2^2)}\xi_1x_2^2
 +\frac{2v_2^2}{v_1(v_1^2-4v_2^2)}\xi_1\xi_2^2
 +\frac{2v_2}{v_1^2-4v_2^2}x_1x_2\xi_2
\right).
\end{aligned}
\end{equation}

Since $U_3$ depends only on $x$, \eqref{eq:position_bracket} reduces to
\begin{equation*}
  \{S_3, U_3\}_3 = - \left(
    \partial_{\xi_1}^3 S_3\,\partial_{x_1}^3 U_3
    + 3\partial_{\xi_1}\partial_{\xi_2}^2 S_3\,\partial_{x_1}\partial_{x_2}^2 U_3
    \right).
\end{equation*}
Inserting the derivatives
$\partial_{x_1}^3 U_3 = 6a_{30}$,
$\partial_{\xi_1}^3 S_3 = -\frac{4a_{30}}{v_1}$,
$\partial_{x_1}\partial_{x_2}^2 U_3 = 2a_{12}$,
$\partial_{\xi_1}\partial_{\xi_2}^2 S_3
 = \frac{4v_2^2 a_{12}}{v_1(v_1^2 - 4v_2^2)}$,
we compute
\begin{align*}
  \{S_3, U_3\}_3
  &= - \left[ \left(-\frac{4a_{30}}{v_1}\right)(6a_{30})
       + 3 \left(\frac{4v_2^2 a_{12}}{v_1(v_1^2 - 4v_2^2)}\right)(2a_{12}) \right] \\
  &= \frac{24}{v_1} \left( a_{30}^2 - \frac{v_2^2}{v_1^2 - 4v_2^2} a_{12}^2 \right).
\end{align*}
This expression is constant, so it is already resonant. Substituting it into
\eqref{eq:b100_projection} gives
\begin{equation*}
  D = b_{1,0,0} = \frac{1}{48} \{S_3, U_3\}_3
    = \frac{1}{2v_1} \left( a_{30}^2 - \frac{v_2^2}{v_1^2 - 4v_2^2} a_{12}^2 \right).
\end{equation*}
Equivalently,
\begin{equation*}
  a_{30}^2 = 2v_1 b_{1,0,0} + \frac{v_2^2}{v_1^2 - 4v_2^2} a_{12}^2.
\end{equation*}
The prescribed sign of $a_{30}$ therefore determines $a_{30}$ itself. Hence
$U_3$ and $S_3$ are determined.

\proofstep{Step 4. Reconstruction of $U_4$ from its resonant projection.}

The $\hbar^0$ part of \eqref{eq:m4} is
\[
  U_4 - L(S_4^0) -\frac12\{S_3,U_3\}
  = B_4^0 = A\Omega_1^2 + B\Omega_1\Omega_2 + C\Omega_2^2.
\]
Applying the resonant projection $\Pi$ to both sides and using
$\Pi(L(S_4^0))=0$, we obtain
\[
  \Pi(U_4) = B_4^0 + \frac12\Pi(\{S_3,U_3\}).
\]
The right-hand side is known. By reflection symmetry,
$U_4=a_{40}x_1^4+a_{22}x_1^2x_2^2+a_{04}x_2^4$, and
\[
  \Pi(U_4) = \frac{3}{8}a_{40}\Omega_1^2 + \frac{1}{4}a_{22}\Omega_1\Omega_2
           + \frac{3}{8}a_{04}\Omega_2^2.
\]
Thus $a_{40}$, $a_{22}$, and $a_{04}$ are uniquely determined, in agreement
with Lemma~\ref{lem:v2n}.

\proofstep{Step 5. Reconstruction of the first two $\hbar$-layers of $S_4$.}

The $\hbar^0$ and $\hbar^2$ parts of \eqref{eq:m4} now have known
right-hand sides. Taking their nonresonant parts and imposing
$\Pi S_4^0=\Pi S_4^2=0$ determines $S_4^0$ and $S_4^2$ uniquely. This
establishes the base case.

\subsection{\label{subsec:induction}Induction step}

\proofstep{Step 1. Homological equations in degrees $2N-1$ and $2N$.}

Let $N\ge3$. Suppose that the Taylor expansion of $U$ is already known up to
order $2N-2$, together with the $\hbar^0$ and $\hbar^2$ layers of
$S_3,\dots,S_{2N-2}$. Put $S'=S_3+\cdots+S_{2N-2}$. Then
\begin{equation}
\label{eq:truncated_conjugation}
 e^{\frac{\mathrm i}{\hbar}\mathrm{ad}^\star_{S'}}
 (H_2+U_3+\cdots+U_{2N-2})
 =H_2+B_4+\cdots+B_{2N-2}+R_{2N-1}+R_{2N}
 +\mathcal O_{\mathrm w}(2N+1),
\end{equation}
where $R_{2N-1}$ and $R_{2N}$ depend only on the previously determined
data. In particular, their $\hbar^0$ and $\hbar^2$ layers depend only on the
corresponding layers of $S_3,\dots,S_{2N-2}$.

For weighted homogeneous symbols $A\in\mathcal W_a$ and
$C\in\mathcal W_c$, set
$D_A C=\frac{\mathrm i}{\hbar}[A,C]^\star$. Then $D_A C$ has weighted
degree $a+c-2$, and an iterated bracket
$D_{A_1}\cdots D_{A_r}C$, with $A_j\in\mathcal W_{a_j}$, has degree
\[
c+\sum_{j=1}^r(a_j-2).
\]
Consequently, through degree $2N$, the only terms involving the new symbols
$U_{2N-1}$, $U_{2N}$, $S_{2N-1}$, and $S_{2N}$ are those displayed below;
all other terms depend only on the induction data and are absorbed into the
remainders.

Let $S=S'+S_{2N-1}+S_{2N}$ normalize the Hamiltonian through degree $2N$.
Comparing terms of degrees $2N-1$ and $2N$ gives
\begin{equation}
\label{eq:odd}
U_{2N-1}+\frac{\mathrm i}{\hbar}[S_{2N-1},H_2]^\star
+R_{2N-1}=0
\end{equation}
and
\begin{align}
\label{eq:even}
\begin{split}
U_{2N}-L(S_{2N})
&+\frac{\mathrm i}{\hbar}[S_{2N-1},U_3]^\star
 +\frac{\mathrm i}{\hbar}[S_3,U_{2N-1}]^\star\\
&+\frac12\left(\frac{\mathrm i}{\hbar}\right)^2
 \Bigl([S_{2N-1},[S_3,H_2]^\star]^\star
       +[S_3,[S_{2N-1},H_2]^\star]^\star\Bigr)\\
&+R_{2N}=B_{2N}.
\end{split}
\end{align}
Here and below, the remainder notation refers only to terms already known from
the induction hypothesis.

\proofstep{Determination of $S_{2N-1}$.}
Write $S_{2N-1}=S_{2N-1}^0+\hbar^2S_{2N-1}^2+\cdots$. The first two layers of
\eqref{eq:odd} are
\begin{align}
U_{2N-1}-L(S_{2N-1}^0)+R_{2N-1}^0&=0,
\label{eq:odd_h0}\\
-L(S_{2N-1}^2)+R_{2N-1}^2&=0.
\label{eq:odd_h2}
\end{align}
Since the total degree is odd, there is no resonant obstruction. Therefore
\begin{equation}
\label{eq:Sodd_split}
S_{2N-1}^0=L^{-1}U_{2N-1}+L^{-1}R_{2N-1}^0,
\qquad
S_{2N-1}^2=L^{-1}R_{2N-1}^2.
\end{equation}
The second identity and the second summand in the first identity are known.

By the reflection symmetry,
\begin{equation}
\label{eq:U_odd}
U_{2N-1}=\sum_{m=0}^{N-1}a_{2N-1-2m,2m}
 x_1^{2N-1-2m}x_2^{2m}.
\end{equation}
In complex coordinates this is
\begin{align}
U_{2N-1}
={}&\sum_{m=0}^{N-1}\frac{a_{2N-1-2m,2m}}{2^{2N-1}}
\sum_{l=0}^{2N-1-2m}\sum_{j=0}^{2m}
\binom{2N-1-2m}{l}\binom{2m}{j}\notag\\
&\qquad\times
z^{(2N-1-2m-l,2m-j)}\bar z^{(l,j)}.
\label{eq:U_odd_complex}
\end{align}
Set
\[
\lambda_{m,l,j}=v_1(2N-1-2m-2l)+v_2(2m-2j).
\]
Using \eqref{eq:L_inverse}, the unknown part of $S_{2N-1}^0$ is
\begin{align}
L^{-1}U_{2N-1}
={}&\mathrm i\sum_{m=0}^{N-1}
\frac{a_{2N-1-2m,2m}}{2^{2N-1}}
\sum_{l=0}^{2N-1-2m}\sum_{j=0}^{2m}
\frac{\binom{2N-1-2m}{l}\binom{2m}{j}}{\lambda_{m,l,j}}
\notag\\
&\qquad\times
z^{(2N-1-2m-l,2m-j)}\bar z^{(l,j)}.
\label{eq:S0}
\end{align}
No denominator in \eqref{eq:S0} vanishes. Otherwise the corresponding monomial
would be resonant, which is impossible in odd weighted degree.

\proofstep{Step 2. Simplification of the degree-$2N$ equation.}

From the equations of degrees $3$ and $2N-1$,
\[
[S_3,H_2]^\star=\mathrm i\hbar U_3,
\qquad
[S_{2N-1},H_2]^\star
=\mathrm i\hbar(U_{2N-1}+R_{2N-1}).
\]
Substituting these identities into \eqref{eq:even} and absorbing all known
contributions into a new remainder $\widetilde R_{2N}$ gives
\begin{equation}
\label{eq:even_merged}
U_{2N}-L(S_{2N})
+\frac12\frac{\mathrm i}{\hbar}[S_3,U_{2N-1}]^\star
+\frac12\frac{\mathrm i}{\hbar}[S_{2N-1},U_3]^\star
+\widetilde R_{2N}=B_{2N}.
\end{equation}
The $\hbar^0$ and $\hbar^2$ layers of $\widetilde R_{2N}$ are known. Taking the
$\hbar^2$ layer of \eqref{eq:even_merged}, applying $\Pi$, and using
\eqref{eq:Sodd_split}, we obtain
\begin{equation}
\label{eq:h2_reduced}
B_{2N}^2=\frac1{48}\Pi\Bigl(
\{S_3,U_{2N-1}\}_3+\{L^{-1}U_{2N-1},U_3\}_3\Bigr)
+K_{2N}^2,
\end{equation}
where $K_{2N}^2$ denotes the sum of the projected $\hbar^2$ terms depending
only on the induction data.

\proofstep{Step 3. Reconstruction of $U_{2N-1}$ and $S_{2N-1}$.}

For a resonant polynomial $F$, let $[F]_{r,s}$ denote the coefficient of
$\Omega_1^r\Omega_2^s$. The next lemma exhibits the triangular structure of
the unknown part of \eqref{eq:h2_reduced}.

\begin{lemma}
\label{lem:triangular}
Let $0\le m\le N-2$ and put $n=N-m$. Set
\[
A_m=a_{2N-1-2m,2m}=a_{2n-1,2m},
\qquad A_{m+1}=a_{2n-3,2m+2}.
\]
Then
\begin{equation}
\label{eq:triangular_relation}
\left[\Pi\Bigl(
\{S_3,U_{2N-1}\}_3+\{L^{-1}U_{2N-1},U_3\}_3
\Bigr)\right]_{n-2,m}
=C_{N,m}A_m+G_{N,m}A_{m+1},
\end{equation}
where
\begin{equation}
\label{eq:C_Nm}
C_{N,m}=\frac{64a_{30}}{v_1\,2^{2N-1}}
\binom{2m}{m}\binom{2n-1}{n}n(n-1)^2
\end{equation}
and
\begin{equation}
\label{eq:G_Nm}
\begin{split}
G_{N,m}={}&-\frac{24a_{12}v_2^2}
{v_1(v_1^2-4v_2^2)\,2^{2N-4}}
(2n-3)(2m+2)(2m+1)\\
&\times\binom{2n-4}{n-2}\binom{2m}{m}.
\end{split}
\end{equation}
No other coefficient of $U_{2N-1}$ occurs in
\eqref{eq:triangular_relation}.
\end{lemma}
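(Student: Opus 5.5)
The idea is to expand both brackets with the position-bracket formula \eqref{eq:position_bracket}, using that $U_3$ and $U_{2N-1}$ depend only on $x$. Since $U_3=a_{30}x_1^3+a_{12}x_1x_2^2$, the only nonzero third derivatives of $U_3$ are $\partial_{x_1}^3U_3=6a_{30}$ and $\partial_{x_1}\partial_{x_2}^2U_3=2a_{12}$. Hence
\[
\{L^{-1}U_{2N-1},U_3\}_3=-\bigl(6a_{30}\,\partial_{\xi_1}^3L^{-1}U_{2N-1}+6a_{12}\,\partial_{\xi_1}\partial_{\xi_2}^2L^{-1}U_{2N-1}\bigr).
\]
The first bracket is expanded symmetrically in the same way. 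For $\{S_3,U_{2N-1}\}_3$ we use the explicit formula \eqref{eq:S3_explicit}: the $\xi$-third derivatives of $S_3$ are constants, namely $\partial_{\xi_1}^3S_3=-4a_{30}/v_1$ and $\partial_{\xi_1}\partial_{\xi_2}^2S_3=4v_2^2a_{12}/(v_1(v_1^2-4v_2^2))$, while $\partial_{\xi_1}^2\partial_{\xi_2}S_3=\partial_{\xi_2}^3S_3=0$. So this bracket equals
\[
\tfrac{4a_{30}}{v_1}\,\partial_{x_1}^3U_{2N-1}-\tfrac{12v_2^2a_{12}}{v_1(v_1^2-4v_2^2)}\,\partial_{x_1}\partial_{x_2}^2U_{2N-1},
\]
which is a polynomial in $x$ of degree $2N-4$. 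Its resonant projection is then read off from the binomial computation of Lemma~\ref{lem:v2n}. The coefficient of $\Omega_1^{n-2}\Omega_2^m$ only sees monomials $x_1^{2n-4}x_2^{2m}$. Applying $\partial_{x_1}^3$ to the $A_m$ term $x_1^{2n-1}x_2^{2m}$, and $\partial_{x_1}\partial_{x_2}^2$ to the $A_{m+1}$ term $x_1^{2n-3}x_2^{2m+2}$, both produce $x_1^{2n-4}x_2^{2m}$. This explains the triangular structure: only $A_m$ and $A_{m+1}$ occur. It also gives the factors $(2n-3)(2m+2)(2m+1)\binom{2n-4}{n-2}\binom{2m}{m}/2^{2N-4}$ visible in $G_{N,m}$.

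The second bracket is harder because $L^{-1}U_{2N-1}$ involves $\xi$. I would work in complex coordinates, with $\partial_{\xi_j}=\mathrm i(\partial_{z_j}-\partial_{\bar z_j})$, applied to \eqref{eq:S0}. A cleaner route is to use the relation $\xi$-derivatives $\leftrightarrow$ $L$. Since $L$ commutes with $\partial_{\xi}$ up to replacing $\partial_{\xi_j}$ by $\partial_{x_j}$ (via $[\partial_{\xi_j},L]=\pm v_j\partial_{x_j}$), one can compute $\Pi(\partial^3_\xi L^{-1}U_{2N-1})$ monomial by monomial. The term $\partial_{\xi_1}^3$ lowers the $z_1$-weight difference by hitting $z_1^{a}\bar z_1^{b}$. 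To land on $\Omega_1^{n-2}\Omega_2^m$, the source monomial in \eqref{eq:U_odd_complex} must have $z_2$-degree $(m,m)$ and $z_1$-degrees $(a,b)$ with $a+b=2n-1$ and $|a-b|\in\{1,3\}$. The $x_2$-degree is preserved, so only $A_m$ contributes here. For the $\partial_{\xi_1}\partial_{\xi_2}^2$ term with $\Pi$ onto $\Omega_1^{n-2}\Omega_2^m$, the $x_2$-degree must drop by $2$, so only $A_{m+1}$ contributes. The contributing monomials have denominators $\lambda_{m,l,j}=\pm v_1$ or $\pm3v_1$ (for the $A_m$ part), and $\pm v_1\pm 2v_2$-type values (for the $A_{m+1}$ part).

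The key steps, in order, are:
\begin{enumerate}
\item Confirm that only $A_m$ and $A_{m+1}$ appear, using the degree bookkeeping above.
\item Sum the finitely many contributions with their denominators.
\item Combine them with the contributions from the first bracket.
\end{enumerate}
For the $A_{m+1}$ part, the mixed denominators should collapse via $\frac1{v_1-2v_2}+\frac1{v_1+2v_2}$-type identities into the factor $v_2^2/(v_1(v_1^2-4v_2^2))$. For the $A_m$ part, the sum over the four relevant $l$ (with denominators $\pm v_1,\pm3v_1$) combined with the first bracket should yield $\binom{2n-1}{n}n(n-1)^2$ times $64a_{30}/(v_1 2^{2N-1})$.

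The main obstacle is this last binomial simplification: adding terms like $\binom{2n-1}{n-2}/(3v_1)$, $\binom{2n-1}{n-1}/v_1$ with the falling-factorial weights from $\partial_{z_1},\partial_{\bar z_1}$, and verifying that the total collapses exactly to $C_{N,m}$ and $G_{N,m}$. I would first test $N=3$, $m=0,1$ by hand to fix signs, then do the general identity by expressing every binomial in terms of $\binom{2n-1}{n}$ using ratios $\frac{n-1}{n+1}$ and similar. The details are relegated to Appendix~\ref{app:explicit_calc}.
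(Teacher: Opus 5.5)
Your proposal follows the paper's proof. It uses the same position-bracket expansion of both brackets, the same bidegree bookkeeping showing that only $A_m$ and $A_{m+1}$ can reach $\Omega_1^{n-2}\Omega_2^m$, and the same complex-coordinate evaluation of the $L^{-1}$ terms (four surviving monomials with denominators $\pm v_1,\pm3v_1$, six with $\pm v_1\pm2v_2$ and $\pm v_1$), which is exactly what the paper's appendix identities \eqref{eq:projected_diag}--\eqref{eq:projected_cross} carry out.

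The binomial simplification you flag as the main obstacle is immediate. With $l=n-2+r$ one has $\binom{2n-1}{l}(2n-1-l)_{3-r}(l)_r=(2n-1)!/((n-2)!)^2$ independently of $r$, and analogously in the cross case. The sums therefore reduce to $\bigl(\tfrac13-3-3+\tfrac13\bigr)/v_1=-\tfrac{16}{3v_1}$ and $2\bigl(\tfrac1{v_1-2v_2}-\tfrac2{v_1}+\tfrac1{v_1+2v_2}\bigr)$. After multiplying by $-6a_{30}$ and $-6a_{12}$, these give contributions exactly equal to those of the first bracket.
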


\begin{proof}
For $p,q\ge0$, let $\mathcal P_{p,q}$ denote the polynomials homogeneous of
degree $p$ in $(x_1,\xi_1)$ and degree $q$ in $(x_2,\xi_2)$. The formula
\[
L=v_1(\xi_1\partial_{x_1}-x_1\partial_{\xi_1})
 +v_2(\xi_2\partial_{x_2}-x_2\partial_{\xi_2})
\]
shows that $L$ preserves each $\mathcal P_{p,q}$. The same is true of $\Pi$,
and $L^{-1}$ preserves the bidegree on the nonresonant subspace.

From \eqref{eq:S3_explicit} and \eqref{eq:position_bracket},
\begin{equation}
\label{eq:first_unknown_bracket}
\{S_3,U_{2N-1}\}_3
=\frac{4a_{30}}{v_1}\partial_{x_1}^3U_{2N-1}
-\frac{12a_{12}v_2^2}{v_1(v_1^2-4v_2^2)}
 \partial_{x_1}\partial_{x_2}^2U_{2N-1},
\end{equation}
while
\begin{equation}
\label{eq:second_unknown_bracket}
\{L^{-1}U_{2N-1},U_3\}_3
=-6a_{30}\partial_{\xi_1}^3L^{-1}U_{2N-1}
-6a_{12}\partial_{\xi_1}\partial_{\xi_2}^2L^{-1}U_{2N-1}.
\end{equation}

The target monomial $\Omega_1^{n-2}\Omega_2^m$ has bidegree
$(2n-4,2m)$. In both \eqref{eq:first_unknown_bracket} and
\eqref{eq:second_unknown_bracket}, the pure third derivative in the first
coordinate lowers the bidegree by $(3,0)$. Hence it can contribute to the
target coefficient only from
\[
A_mx_1^{2n-1}x_2^{2m}
\quad\text{or}\quad
L^{-1}(A_mx_1^{2n-1}x_2^{2m}).
\]
Similarly, the mixed derivative lowers the bidegree by $(1,2)$, so it can
contribute only from
\[
A_{m+1}x_1^{2n-3}x_2^{2m+2}
\quad\text{or}\quad
L^{-1}(A_{m+1}x_1^{2n-3}x_2^{2m+2}).
\]
Therefore no coefficient other than $A_m$ and $A_{m+1}$ can occur in
\eqref{eq:triangular_relation}.

Set
\[
\kappa_{n,m}
=\frac1{2^{2N-4}}\binom{2n-4}{n-2}\binom{2m}{m},
\qquad
\Pi(x_1^{2n-4}x_2^{2m})
=\kappa_{n,m}\Omega_1^{n-2}\Omega_2^m.
\]
We now compute separately the coefficients multiplying $A_m$ and
$A_{m+1}$.

For $A_m$, the first term of \eqref{eq:first_unknown_bracket} gives
\[
\frac{4a_{30}}{v_1}
(2n-1)(2n-2)(2n-3)\kappa_{n,m}A_m.
\]
Identity \eqref{eq:projected_diag} gives
\begin{align*}
&-6a_{30}
\left[\Pi\partial_{\xi_1}^3L^{-1}
\bigl(A_mx_1^{2n-1}x_2^{2m}\bigr)\right]_{n-2,m}\\
&\qquad=\frac{4a_{30}}{v_1}
(2n-1)(2n-2)(2n-3)\kappa_{n,m}A_m.
\end{align*}
Thus the two brackets make equal contributions to the coefficient of $A_m$.
Adding these two contributions, we obtain
\[
\frac{8a_{30}}{v_1}
(2n-1)(2n-2)(2n-3)\kappa_{n,m}A_m.
\]
The elementary identity
\[
(2n-1)(2n-2)(2n-3)\binom{2n-4}{n-2}
=n(n-1)^2\binom{2n-1}{n}
\]
converts the coefficient in the preceding display exactly into
$C_{N,m}$ in \eqref{eq:C_Nm}.

For $A_{m+1}$, the mixed term of
\eqref{eq:first_unknown_bracket} contributes
\[
-\frac{12a_{12}v_2^2}{v_1(v_1^2-4v_2^2)}
(2n-3)(2m+2)(2m+1)\kappa_{n,m}A_{m+1}.
\]
Using identity \eqref{eq:projected_cross}, the mixed term of
\eqref{eq:second_unknown_bracket} contributes the same quantity:
\begin{align*}
&-6a_{12}
\left[\Pi\partial_{\xi_1}\partial_{\xi_2}^2L^{-1}
\bigl(A_{m+1}x_1^{2n-3}x_2^{2m+2}\bigr)\right]_{n-2,m}\\
&\qquad=-\frac{12a_{12}v_2^2}{v_1(v_1^2-4v_2^2)}
(2n-3)(2m+2)(2m+1)\kappa_{n,m}A_{m+1}.
\end{align*}
Adding these two contributions and substituting the value of
$\kappa_{n,m}$ from its definition above gives precisely $G_{N,m}A_{m+1}$
with $G_{N,m}$ as in \eqref{eq:G_Nm}. Combining the $A_m$ and $A_{m+1}$
terms proves \eqref{eq:triangular_relation}.
\end{proof}

Returning to \eqref{eq:h2_reduced}, Lemma~\ref{lem:triangular} gives
\begin{equation}
\label{eq:recurrence}
C_{N,m}A_m+G_{N,m}A_{m+1}=F_{N,m},
\qquad 0\le m\le N-2,
\end{equation}
where
\[
F_{N,m}=48[B_{2N}^2-K_{2N}^2]_{n-2,m}
\]
is known. The endpoint $A_{N-1}=a_{1,2N-2}$ belongs to the prescribed
transverse-line data. Since $n\ge2$ and $a_{30}\ne0$, the coefficient
$C_{N,m}$ is nonzero. Starting with $m=N-2$ and proceeding backward to $m=0$,
\eqref{eq:recurrence} uniquely determines all coefficients of
$U_{2N-1}$. Equations \eqref{eq:Sodd_split} then determine the first two layers
of $S_{2N-1}$.

\proofstep{Step 4. Reconstruction of $U_{2N}$ from its resonant projection.}
The $\hbar^0$ part of \eqref{eq:even_merged} is
\[
U_{2N}-L(S_{2N}^0)-\frac12\{S_3,U_{2N-1}\}
-\frac12\{S_{2N-1}^0,U_3\}+\widetilde R_{2N}^0=B_{2N}^0.
\]
Applying the resonant projection gives
\begin{equation}
\label{eq:even_reconstruction}
\Pi(U_{2N})=B_{2N}^0
+\frac12\Pi\bigl(\{S_3,U_{2N-1}\}+\{S_{2N-1}^0,U_3\}\bigr)
-\Pi(\widetilde R_{2N}^0).
\end{equation}
The right-hand side is known. By Lemma~\ref{lem:v2n}, $U_{2N}$ is uniquely
determined.

\proofstep{Step 5. Reconstruction of the first two $\hbar$-layers of $S_{2N}$.}

The nonresonant parts of the two homological equations, together with
$\Pi S_{2N}^0=\Pi S_{2N}^2=0$, determine the $\hbar^0$ and $\hbar^2$ layers of
$S_{2N}$. This completes the induction and proves
Theorem~\ref{thm:main}.

\section{Conclusion}

For a two-dimensional semiclassical Schr\"odinger operator with a single
reflection symmetry, we have shown that the first two layers of the QBNF,
together with $\operatorname{sgn}(a_{30})$ and the transverse coefficients
$\{a_{1,2k}\}_{k\ge1}$, determine the complete Taylor series of the potential
at the bottom of the well. The reconstruction is degree by degree: the
$\hbar^2$ layer yields a triangular system for the odd homogeneous terms, and
the classical layer then recovers the even homogeneous terms from their
resonant projections. Thus the transverse sequence plays the role of Cauchy
data for a constructive two-layer inverse problem.

\appendix
\section{\label{app:explicit_calc}Projected derivative identities}

The following identities provide the coefficients used in
Lemma~\ref{lem:triangular}.

\begin{lemma}
\label{lem:projected_derivatives}
Let $n\ge2$, $m\ge0$, and $N=n+m$. Then
\begin{equation}
\label{eq:projected_diag}
\begin{split}
\Pi\partial_{\xi_1}^3L^{-1}
\bigl(x_1^{2n-1}x_2^{2m}\bigr)
={}&-\frac{2}{3v_1}(2n-1)(2n-2)(2n-3)\\
&\times\Pi\bigl(x_1^{2n-4}x_2^{2m}\bigr),
\end{split}
\end{equation}
and
\begin{equation}
\label{eq:projected_cross}
\begin{split}
\Pi\partial_{\xi_1}\partial_{\xi_2}^2L^{-1}
\bigl(x_1^{2n-3}x_2^{2m+2}\bigr)
={}&\frac{2v_2^2}{v_1(v_1^2-4v_2^2)}
(2n-3)(2m+2)(2m+1)\\
&\times\Pi\bigl(x_1^{2n-4}x_2^{2m}\bigr).
\end{split}
\end{equation}
\end{lemma}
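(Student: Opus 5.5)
The plan is to reduce both identities to finite binomial sums in complex coordinates, using the fact that every operator involved respects the splitting into the two degrees of freedom. Write $L=v_1L_1+v_2L_2$ with $L_j=\xi_j\partial_{x_j}-x_j\partial_{\xi_j}$. By \eqref{eq:L_action}, a product $z_1^{a}\bar z_1^{b}z_2^{c}\bar z_2^{d}$ is an eigenvector of $L$ with eigenvalue $-\mathrm i\bigl(v_1(a-b)+v_2(c-d)\bigr)$. The operators $\partial_{\xi_j}=\mathrm i(\partial_{z_j}-\partial_{\bar z_j})$ act on one factor only. The projection $\Pi$ keeps a monomial exactly when it is diagonal in each coordinate separately. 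Expanding $x_1^{p}x_2^{q}=2^{-p-q}\sum\binom{p}{l}\binom{q}{j}z_1^{p-l}\bar z_1^{l}z_2^{q-j}\bar z_2^{j}$, I would apply $L^{-1}$ termwise via \eqref{eq:L_inverse}, then the $\xi$-derivatives, then $\Pi$. The result is an explicit sum. The denominators never vanish, since all monomials involved have odd total degree.

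For \eqref{eq:projected_diag}, the derivative $\partial_{\xi_1}^3$ does not touch the second factor. So a term survives $\Pi$ only if its second factor is already diagonal, i.e.\ $j=m$, with coefficient $2^{-2m}\binom{2m}{m}$, which is exactly the $x_2$-part of $\Pi(x_2^{2m})$. The denominator then reduces to $v_1(2n-1-2l)$, and the identity becomes the one-dimensional statement
\[
\Pi\,\partial_{\xi_1}^3L_1^{-1}\bigl(x_1^{2n-1}\bigr)=-\tfrac{2}{3}(2n-1)(2n-2)(2n-3)\,\Pi\bigl(x_1^{2n-4}\bigr).
\]
Expanding $(\partial_{z}-\partial_{\bar z})^3$, a monomial $z^{a}\bar z^{b}$ with $a+b=2n-1$ lands on $\Omega_1^{n-2}$ only for $a-b\in\{\pm1,\pm3\}$. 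This gives four terms with denominators $\pm v_1,\pm3v_1$, each weighted by binomial coefficients $\binom{2n-1}{n-2},\binom{2n-1}{n-1}$ times falling factorials. Pairing $a-b=d$ with $a-b=-d$, I expect the sum to collapse, via $\binom{2n-1}{n-1}=\binom{2n-1}{n}$ and ratios of neighbouring binomials, to the stated multiple of $2^{-(2n-4)}\binom{2n-4}{n-2}$. As a sanity check, for $n=2$, $m=0$ this must reproduce $\partial_{\xi_1}^3S_3=-4a_{30}/v_1$ from \eqref{eq:S3_explicit}, and indeed $-\tfrac{2}{3v_1}\cdot6=-\tfrac{4}{v_1}$.

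For \eqref{eq:projected_cross}, the same bookkeeping applies. The derivative $\partial_{\xi_1}$ requires $a-b=\pm1$ in the first factor. The derivative $\partial_{\xi_2}^2$ applied to $z_2^{c}\bar z_2^{d}$ with $c+d=2m+2$ lands on $\Omega_2^{m}$ only for $c-d\in\{-2,0,2\}$. This produces six terms with denominators $\pm v_1$ and $\pm v_1\pm2v_2$. I would again pair each term with its conjugate (reversing all signs) and combine the three resulting fractions over the common denominator $v_1(v_1^2-4v_2^2)$. The $x_1$-sum and the $x_2$-sum should factor separately into $(2n-3)\kappa$-type and $(2m+2)(2m+1)\kappa$-type factors, leaving the scalar $2v_2^2/(v_1(v_1^2-4v_2^2))$. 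The case $n=2$, $m=0$ again reproduces $\partial_{\xi_1}\partial_{\xi_2}^2S_3$ from \eqref{eq:S3_explicit}, which fixes signs and normalizations.

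The main obstacle is the binomial summation in the cross identity: the $v_1$ and $v_2$ dependence must factor cleanly, and the result is not guaranteed to simplify without care. My first attempt would be to organize the computation as a product of two one-dimensional sums, one with weights $1/(v_1\epsilon_1+v_2\epsilon_2)$ for $\epsilon_1=\pm1$ and $\epsilon_2\in\{0,\pm2\}$, after checking that the binomial weights of each factor are independent of the other factor's sign choice. If that independence holds, the frequency-dependent part reduces to the elementary sum $\sum_{\epsilon_1,\epsilon_2}w_{\epsilon_1}w'_{\epsilon_2}/(v_1\epsilon_1+v_2\epsilon_2)$. That sum is checked once, and it is the same sum that occurs at $n=2$, $m=0$. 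The remaining purely combinatorial factors are then standard ratios of central binomial coefficients.
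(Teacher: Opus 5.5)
Your proposal is correct and is essentially the paper's own argument. Like the paper, you expand in $z,\bar z$, apply $L^{-1}$ termwise, and observe that resonance forces $j=m$, $l=n-2+r$ (four terms, denominators $\pm v_1,\pm 3v_1$) for the first identity, and $l=n-2+r$, $j=m+s$ (six terms, denominators $\pm v_1,\pm(v_1\pm 2v_2)$) for the second. The arithmetic you defer works out as you anticipate: the coordinate weights factor with $n,m$-independent relative values $(1,-1)$ and $(1,-2,1)$, giving totals $-\frac{16}{3v_1}D_{n,m}$ and $2\bigl(\frac{1}{v_1-2v_2}-\frac{2}{v_1}+\frac{1}{v_1+2v_2}\bigr)M_{n,m}$, where $D_{n,m}=\frac{(2n-1)!}{((n-2)!)^2}\binom{2m}{m}$ and $M_{n,m}=\frac{(2n-3)!}{((n-2)!)^2}\frac{(2m+2)!}{(m!)^2}$.
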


\begin{proof}
We use
\[
x_j=\frac{z_j+\bar z_j}{2},
\qquad
\partial_{\xi_j}=\mathrm i(\partial_{z_j}-\partial_{\bar z_j}),
\]
together with \eqref{eq:L_inverse}. For a nonnegative integer $a$, let
$(a)_r=a(a-1)\cdots(a-r+1)$, with $(a)_0=1$ and $(a)_r=0$ when $r>a$.
Notice that the factor $\mathrm i$ in $L^{-1}$ cancels the factor
$\mathrm i^3=-\mathrm i$ contributed by either third-order differential
operator below.

We first prove \eqref{eq:projected_diag}. Expand
\begin{equation}
\label{eq:diag_expansion}
x_1^{2n-1}x_2^{2m}
=\frac1{2^{2N-1}}
\sum_{l=0}^{2n-1}\sum_{j=0}^{2m}
\binom{2n-1}{l}\binom{2m}{j}
 z_1^{2n-1-l}\bar z_1^lz_2^{2m-j}\bar z_2^j.
\end{equation}
For the term indexed by $(l,j)$, set
\[
\lambda_{l,j}=v_1(2n-1-2l)+v_2(2m-2j).
\]
The total degree is odd, so $\lambda_{l,j}\ne0$. With
$D_1=\partial_{z_1}-\partial_{\bar z_1}$, one has
\[
D_1^3\bigl(z_1^{2n-1-l}\bar z_1^l\bigr)
=\sum_{r=0}^3(-1)^r\binom3r
(2n-1-l)_{3-r}(l)_r
z_1^{2n-4-l+r}\bar z_1^{l-r}.
\]
After differentiation, resonance in the second coordinate forces $j=m$, while
resonance in the first coordinate forces $l=n-2+r$. Thus exactly four terms
survive. Define
\[
D_{n,m}=\frac{(2n-1)!}{((n-2)!)^2}\binom{2m}{m}.
\]
After the common factor
$2^{-(2N-1)}\Omega_1^{n-2}\Omega_2^m$ is removed, the four scalar
contributions are
\begin{center}
\renewcommand{\arraystretch}{1.3}
\setlength{\tabcolsep}{1.5em}
\begin{tabular}{ccccc}
\toprule
$r$ & $0$ & $1$ & $2$ & $3$ \\
\midrule
Scalar contribution
& $\dfrac{1}{3v_1}D_{n,m}$
& $-\dfrac{3}{v_1}D_{n,m}$
& $-\dfrac{3}{v_1}D_{n,m}$
& $\dfrac{1}{3v_1}D_{n,m}$ \\
\bottomrule
\end{tabular}
\end{center}
Their sum gives
\begin{equation}
\label{eq:diag_projected_value}
\Pi\partial_{\xi_1}^3L^{-1}
\bigl(x_1^{2n-1}x_2^{2m}\bigr)
=-\frac{16}{3v_1\,2^{2N-1}}
\frac{(2n-1)!}{((n-2)!)^2}\binom{2m}{m}
\Omega_1^{n-2}\Omega_2^m.
\end{equation}
On the other hand,
\begin{equation}
\label{eq:common_projection}
\Pi\bigl(x_1^{2n-4}x_2^{2m}\bigr)
=\frac1{2^{2N-4}}
\binom{2n-4}{n-2}\binom{2m}{m}
\Omega_1^{n-2}\Omega_2^m.
\end{equation}
The ratio of the scalar coefficients in
\eqref{eq:diag_projected_value} and \eqref{eq:common_projection} is
\[
-\frac{2}{3v_1}(2n-1)(2n-2)(2n-3),
\]
which proves \eqref{eq:projected_diag}.

We next prove \eqref{eq:projected_cross}. Expand
\begin{equation}
\label{eq:cross_expansion}
x_1^{2n-3}x_2^{2m+2}
=\frac1{2^{2N-1}}
\sum_{l=0}^{2n-3}\sum_{j=0}^{2m+2}
\binom{2n-3}{l}\binom{2m+2}{j}
 z_1^{2n-3-l}\bar z_1^lz_2^{2m+2-j}\bar z_2^j.
\end{equation}
For the term indexed by $(l,j)$, set
\[
\mu_{l,j}=v_1(2n-3-2l)+v_2(2m+2-2j).
\]
Again $\mu_{l,j}\ne0$ because the total degree is odd. Let
$D_2=\partial_{z_2}-\partial_{\bar z_2}$. If $r\in\{0,1\}$ and
$s\in\{0,1,2\}$ count the derivatives falling on $\bar z_1$ and $\bar z_2$,
respectively, then the corresponding coefficient in $D_1D_2^2$ is
$(-1)^{r+s}\binom1r\binom2s$. Resonance forces
\[
l=n-2+r,
\qquad
j=m+s.
\]
Hence six terms survive. Define
\[
M_{n,m}=\frac{(2n-3)!}{((n-2)!)^2}
\frac{(2m+2)!}{(m!)^2}.
\]
After the common factor
$2^{-(2N-1)}\Omega_1^{n-2}\Omega_2^m$ is removed, their scalar
contributions are
\begin{center}
\renewcommand{\arraystretch}{1.3}
\setlength{\tabcolsep}{1.35em}
\begin{tabular}{cccc}
\toprule
$r\backslash s$ & $0$ & $1$ & $2$ \\
\midrule
$0$
& $\dfrac{1}{v_1+2v_2}M_{n,m}$
& $-\dfrac{2}{v_1}M_{n,m}$
& $\dfrac{1}{v_1-2v_2}M_{n,m}$ \\
$1$
& $\dfrac{1}{v_1-2v_2}M_{n,m}$
& $-\dfrac{2}{v_1}M_{n,m}$
& $\dfrac{1}{v_1+2v_2}M_{n,m}$ \\
\bottomrule
\end{tabular}
\end{center}
Their sum is
\[
2\left(\frac1{v_1-2v_2}-\frac2{v_1}+\frac1{v_1+2v_2}\right)M_{n,m}
=\frac{16v_2^2}{v_1(v_1^2-4v_2^2)}M_{n,m}.
\]
Therefore
\begin{equation}
\label{eq:cross_projected_value}
\begin{split}
\Pi\partial_{\xi_1}\partial_{\xi_2}^2L^{-1}
\bigl(x_1^{2n-3}x_2^{2m+2}\bigr)
={}&\frac{16v_2^2}{v_1(v_1^2-4v_2^2)\,2^{2N-1}}\\
&\times\frac{(2n-3)!}{((n-2)!)^2}
\frac{(2m+2)!}{(m!)^2}
\Omega_1^{n-2}\Omega_2^m.
\end{split}
\end{equation}
Dividing the scalar coefficient in \eqref{eq:cross_projected_value} by that in
\eqref{eq:common_projection} gives
\[
\frac{2v_2^2}{v_1(v_1^2-4v_2^2)}
(2n-3)(2m+2)(2m+1),
\]
which proves \eqref{eq:projected_cross}.
\end{proof}

\end{document}